\documentclass[12pt]{amsart}
\usepackage{pstricks}
\usepackage{amsfonts}
\usepackage{amssymb}
\usepackage{amsthm}
\usepackage{latexsym,amsmath}
\usepackage{graphicx}
\usepackage{stmaryrd}

\newtheorem{theorem}{Theorem}[section]
\theoremstyle{definition}
\newtheorem{Definition}[theorem]{Definition}

\newenvironment{theorem*}[1]{\medskip
                            \noindent
                            {\bf Theorem #1. }\ %
                            \begingroup \sl}
                            {\endgroup\medskip}

\begin{document}

\title[All large-cardinal axioms are justified]{All large-cardinal axioms not known to be inconsistent with ZFC are justified}

\author[$\mathrm{M^{\lowercase{c}}Callum}$ ]{\textbf{Rupert} $\mathbf{M^{\lowercase{c}}Callum}$ }

\begin{abstract}

In other work we have outlined how, building on ideas of Welch and Roberts, one can motivate believing in the existence of supercompact cardinals. After making this observation we strove to formulate a justification for large-cardinal axioms of greater strength, and arrived at a motivation for the existence of Vop\v{e}nka scheme cardinals. This line of argument also led to the conclusion that a theory $B_0(V_0)$ described in a previous paper of Victoria Marshall implies the existence of a Vop\v{e}nka scheme cardinal $\kappa$ such that $V_{\kappa} \prec V$ (and therefore, in particular, a proper class of extendible cardinals as well).

\bigskip

Marshall left as an open question whether her theory $B_0(V_0)$, whose consistency is implied by the existence of an almost huge cardinal, implied the existence of supercompact or extendible cardinals. Here both questions are resolved positively. In the final section we give an account of how one could plausibly motivate every large-cardinal axiom not known to be inconsistent with choice while stopping short of the point of inconsistency with choice.

\end{abstract}

\maketitle

\section{Introduction}

In \cite{Incurvati2016}, Luca Incurvati defines the scheme $\textsf{RP}_{m,n}$ as follows, for all integers $m, n$ such that $0<m\leq n$. The schema $\textsf{RP}_{m,n}$ is defined to be a schema of sentences in the $n$th-order language of set theory, where for each well-formed formula $\phi$ in the $n$th-order language of set theory with quantified variables of order at most $m$ and free variables $A_{1}, \ldots A_{k}$, the universal closure of $\phi(A_{1}, \ldots A_{k})\implies \exists\alpha \phi^{V_{\alpha}}(A_{1}^{\alpha}, \ldots A_{k}^{\alpha})$ is defined to be one of the sentences in the schema, where $A^{\alpha}$ is defined to be $A \cap V_{\alpha}$ for second-order variables $A$, and $A^{\alpha}$ is defined to be $\{B^{\alpha}\mid B \in A\}$ where $A$ is a variable of order greater than the second order. This completes the definition of the schema $\textsf{RP}_{m,n}$.

\bigskip

For example, $\textsf{RP}_{2,2}+\mathrm{Extensionality}+\mathrm{Foundation}+\mathrm{Separation}$ implies

\bigskip

$\textsf{ZF}+\{$proper class of $\Pi^{1}_{n}$-indescribables $\mid n \in \omega\}$.

\bigskip

But, as was first observed by Reinhardt in \cite{Reinhardt74}, and the first explicit proof of which was given by Tait in \cite{Tait2005a}, $\textsf{RP}_{1,3}$ is inconsistent. Tait tried to resolve this by seeking to motivate restrictions on the formula $\phi$, but Koellner showed in \cite{Koellner2009} that even with these restrictions $\textsf{RP}_{3,4}$ is inconsistent. In \cite{Koellner2009} Peter Koellner extensively examined the question of which reflection principles might be intrinsically justified and formulated a family of reflection principles which were special cases of the ones proposed by Tait and which Koellner showed to be provably consistent relative to an $\omega$-Erd\H{o}s cardinal. Koellner also made the conjecture that all reflection principles which could be formulated and plausibly argued to be intrinsically justified, would either prove to be inconsistent or else provably consistent relative to $\kappa(\omega)$, the first $\omega$-Erd\H{o}s cardinal. I then attempted to take this line of investigation further.

\bigskip

Formulating a notion of an $\alpha$-reflective cardinal for ordinals $\alpha>0$ and seeking to motivate this along lines inspired by remarks in the work of Tait, I showed in \cite{McCallum2013} that if $\kappa$ is $\omega$-reflective then $V_{\kappa}$ satisfies $\textsf{RP}_{m,n}$ for all $m, n$ with some restrictions on $\phi$ slightly more restrictive than Tait's ones. And I showed that it is consistent relative to an $\omega$-Erd\H{o}s cardinal that there is a proper class of $\alpha$-reflective cardinals for each $\alpha>0$. In \cite{McCallum2017} I used similar ideas to motivate the idea of an extremely reflective cardinal, also provably consistent relative to an $\omega$-Erd\H{o}s cardinal, and in fact equivalent to the property of being a remarkable cardinal.

\bigskip

Then the work of Sam Roberts \cite{Roberts2017} appeared seeking to answer Peter Koellner's challenge to formulate an intrinsically justified reflection principle of greater consistency strength than $\kappa(\omega)$. A similar attempt had already been made by Philip Welch in \cite{Welch2014}, where a reflection principle in the second-order language of set theory was described implying the existence of a proper class of Shelah cardinals (and therefore in particular a proper class of measurable Woodin cardinals) and consistent relative to a superstrong cardinal. Let us describe the reflection principle discussed by Sam Roberts in \cite{Roberts2017}.

\bigskip

To explain the reflection principle which Roberts formulates in \cite{Roberts2017}, let us begin by explaining the reflection principle that he calls $\textsf{R}_{2}$. This is an axiom schema in the second-order language of set theory. For each formula $\phi(x_{1}, x_{2}, \ldots x_{m}, X_{1}, X_{2}, \ldots X_{n})$ in the second-order language of set theory, there is an axiom asserting that if $\phi$ holds, then there exists an ordinal $\alpha$ such that $x_{1}, x_{2}, \ldots x_{m} \in V_{\alpha}$, and a ``set-sized" family of classes which contains the classes $X_{1}, X_{2}, \ldots X_{n}$, which is itself coded for by a single class, and which is standard for $V_{\alpha}$ in the sense that every subset $X\subseteq V_{\alpha}$ is such that some class in the family has intersection with $V_{\alpha}$ equal to $X$, such that the formula $\phi$ still holds when the first-order variables are relativised to $V_{\alpha}$ and the second-order variables are relativised to the set-sized family of classes. This completes the description of the axiom schema $\textsf{R}_{2}$. Then Roberts extends the axiom schema as follows. He extends the underlying language so as to include a satisfaction predicate for the second-order language of set theory, and then he extends the axiom schema so as to also include an axiom of the kind described for every formula in this extended language, calling this new axiom schema $\textsf{R}_{S}$. Then he denotes by $\textsf{ZFC2}_{S}$ the result of extending $\textsf{ZFC2}$ -- being the same as $\textsf{ZFC}$ except for having Separation and Replacement as single second-order axioms and also having an axiom schema of class comprehension for every formula in the second-order language of set theory -- by adding the usual Tarskian axioms for the satisfaction predicate and extending the class comprehension axiom schema to include axioms involving formulas in the extended language. Then he proceeds to investigate the theory $\textsf{ZFC2}_{S}+\textsf{R}_{S}$. This completes the description of the reflection principle which Roberts considers. He shows that the theory $\textsf{ZFC2}_{S}+\textsf{R}_{S}$ proves the existence of a proper class of 1-extendible cardinals and is consistent relative to a 2-extendible cardinal.

\bigskip

I have explored elsewhere the question of whether this reflection principle is intrinsically justified, and have also described how this line of thought could plausibly be taken further to motivate a reflection principle equivalent to the existence of a supercompact cardinal. Let me briefly explain how that can be done. Given a level $V_{\kappa}$, we can consider structures of the form $(V_{\kappa}, V_{\lambda})$ with $\lambda>\kappa$ and consider some formula $\phi$ in a two-sorted language holding in such a structure relative to a certain finite collection of parameters. It is natural to posit that there should exist a ``set-sized" reflecting structure, containing all the parameters, whose first component is $V_{\alpha}$ for some $\alpha<\kappa$ and whose second component is ``set-sized" in the sense of having cardinality less than $\beth_{\kappa}$ and furthermore such that the transitive collapse of the second component is of the form $V_{\beta}$ for some $\beta>\alpha$. (Here the collapsing map may not be injective.) A level $V_{\kappa}$ satisfies this form of reflection if and only if $\kappa$ is supercompact, as can be seen from Magidor's characterisation of supercompactness. Let us assume for the sake of argument, for the rest of this paper, that these kinds of considerations can be taken as a good motivation for the view that supercompact cardinals are intrinsically justified. Can one use further ideas to motivate large cardinals of still greater strength being intrinsically justified?

\bigskip

In Section 2 I shall tell the story of how I followed a line of thought seeking to find an intrinsic justification for extendible cardinals, building on these ideas, and arrived at a motivation for the existence of Vop\v{e}nka scheme cardinals. This line of argument can be used to obtain a proof that the theory $B_0(V_0)$, defined in \cite{Marshall89}, implies the existence of a Vop\v{e}nka scheme cardinal $\kappa$, such that $V_{\kappa} \prec V$. In particular, the theory $B_0(V_0)$ implies the existence of a proper class of extendible cardinals. Marshall raised in \cite{Marshall89} the question of whether $B_0(V_0)$ implies the existence of supercompact and extendible cardinals, and here both questions are resolved positively. In Section 3 we try to use ideas based on the Marshall's paper to motivate all large cardinals not known to be inconsistent with choice, but not the ones known to be inconsistent with choice.

\section{Vop\v{e}nka scheme cardinals}

Suppose that $\kappa$ is a supercompact cardinal. We define a normal proper filter $F$ on $\kappa$ consisting of those $X \subseteq \kappa$ such that, for some $\delta\geq\kappa$, $\kappa \in j(X)$ for every embedding $j:V \prec M$ witnessing the $\gamma$-supercompactness of $\kappa$ for a $\gamma\geq\delta$. This filter contains the set $X_{\alpha}$, consisting of all $\alpha$-extendible cardinals less than $\kappa$, for each $\alpha<\kappa$. We want to find a sufficient condition for being able to conclude that $\bigcap_{\alpha<\kappa} X_{\alpha} \in F$, so that $V_{\kappa}$ will be a model for the existence of a proper class of extendible cardinals. Let us describe the theory $B_0(V_0)$ of Marshall's paper \cite{Marshall89}.

\bigskip

It is a theory in the first-order language of set theory with the additional constant symbol $V_0$. First, any axiom of $\textsf{ZFC}$, or its relativisation to $V_0$ is taken as an axiom. Also Extensionality and Foundation are taken as axioms. And if $\phi$ is a formula with at least one free variable $x$, which does not contain $u$ or $\kappa$ free, then $\phi(A) \implies \exists \kappa \in \mathrm{On} \exists u (u \cap V_0 = R_\kappa \wedge \forall x \forall y (x, y \in u \implies [x,y] \in u) \wedge \phi^{V_0}(A^u)$ is taken as an axiom, where we define On to be the set of ordinals in $V_0$, $A^u=A\cap u$ if $A\in\mathcal{P}(V_0)$, and $A^u=\{x^u:x\in A\cap u\}$ if $A \notin\mathcal{P}(V_0)$, and $[x,y]=x\times\{0\}\cup y\times\{1\}$. Clearly if $V_{\rho}$ is a model of $B_0(V_0)$ with the constant symbol $V_0$ interpreted by $V_{\kappa}$, then $\kappa$ is a supercompact cardinal in $V_{\rho}$ with $V_{\kappa}\prec V_{\rho}$.

\begin{theorem} Suppose that $\kappa$ is a supercompact cardinal in $V_{\rho}$ with $V_{\kappa} \prec V_{\rho}$ and such that $(V_{\kappa}, V_{\rho})$ is a model of $B_0(V_0)$. Then $V_{\kappa}$ is a model for the assertion that there is a proper class of extendible cardinals. \end{theorem}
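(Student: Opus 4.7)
My plan is to establish $\bigcap_{\alpha<\kappa}X_\alpha\in F$ and deduce the theorem from this. Any $\lambda$ lying in this intersection is $\alpha$-extendible in $V_\rho$ for every $\alpha<\kappa$, and by the elementarity $V_\kappa\prec V_\rho$ applied with the parameters $\lambda,\alpha<\kappa$, this is equivalent to $\lambda$ being $\alpha$-extendible as computed in $V_\kappa$; hence each such $\lambda$ is extendible in $V_\kappa$. Since $F$ is a normal proper filter on a regular cardinal it contains every club, so the intersection is unbounded in $\kappa$, giving a proper class of extendibles in $V_\kappa$.

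To obtain the intersection in $F$, I would exploit $B_0(V_0)$ to extract closure beyond the built-in $\kappa$-completeness, which already supplies $\bigcap_{\alpha<\beta}X_\alpha\in F$ for every $\beta<\kappa$. The natural application is to take $V_0=V_\kappa$, the parameter $A$ to be a set-sized code in $V_\rho$ for the triple $(\kappa,F,\langle X_\alpha:\alpha<\kappa\rangle)$, and $\phi(A)$ the statement that $A$ codes a supercompact cardinal $\mu$, a normal proper filter $G$ on $\mu$, and a sequence $\langle Y_\alpha:\alpha<\mu\rangle$ of sets of $\alpha$-extendibles below $\mu$, each of which belongs to $G$. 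Then $\phi(A)$ holds in $V_\rho$, so the schema delivers some $\kappa'<\kappa$ and a set $u$ with $u\cap V_\kappa=V_{\kappa'}$ and the pairing closure, for which $\phi^{V_0}(A^u)$ holds inside $V_\kappa$. Internally $A^u$ decodes to $(\kappa',F',\langle Y'_\alpha:\alpha\in\mathrm{Ord}^{V_\kappa}\rangle)$, witnessing the same configuration at $\kappa'$ within $V_\kappa$, with the length of the sequence being the full ordinal class of $V_\kappa$ rather than merely $\kappa'$.

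The main work then lies in lifting this reflected structure back up. The set $u$ with its pairing closure plays, in the style of Marshall's derivation of supercompactness from $B_0(V_0)$, the role of external data in $V_\rho$ encoding an elementary comparison between $(V_{\kappa'},u)$ and $(V_\kappa,V_\rho)$; combining such a comparison with the $\gamma$-supercompactness embeddings of $\kappa$ defining $F$ should yield that the full $\kappa$-indexed intersection $\bigcap_{\alpha<\kappa}X_\alpha$ is captured by $F$ at $\kappa$ in the same way that the corresponding intersection at $\kappa'$ is captured by $F'$ in $V_\kappa$. The main obstacle is executing this lifting precisely: one has to show that the definition of $F$ in terms of ``$\kappa\in j(X)$ for all sufficiently supercompact $j$'' interacts correctly with the reflection data supplied by $B_0(V_0)$, so that the pairing-closed envelope $u$ carries enough information to reflect membership in $F$ rather than only membership in some weaker filter.
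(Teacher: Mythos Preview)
Your overall target and final deduction match the paper: show $\bigcap_{\alpha<\kappa}X_\alpha\in F$, then read off unboundedly many extendibles in $V_\kappa$. The gap is that you never establish the target. You reflect a code for $(\kappa,F,\langle X_\alpha\rangle)$ via $B_0(V_0)$ and obtain an analogous configuration at some $\kappa'<\kappa$ inside $V_\kappa$, but then you explicitly defer the crux, writing that ``the main work lies in lifting this reflected structure back up'' and that ``the main obstacle is executing this lifting precisely.'' That lifting \emph{is} the content of the theorem. Reflection in $B_0(V_0)$ sends information downward; knowing that some $\kappa'$ carries a similar filter-and-sequence does not by itself force $\bigcap_{\alpha<\kappa}X_\alpha$ into $F$ at $\kappa$, and you give no mechanism for reversing the direction. (There is also a slip: the reflected sequence has length $\kappa'$, not ``the full ordinal class of $V_\kappa$,'' since under $(\cdot)^u$ every ordinal index $\alpha\geq\kappa'$ with $\alpha\in u$ collapses to $\kappa'$.)

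The paper avoids any lift by using $B_0(V_0)$ for a different purpose. It fixes a definable $Y\in F$ with $Y\subseteq\Delta_{\alpha<\kappa}X_\alpha$ and $V_\delta\prec_n V_\kappa$ for all $\delta\in Y$, and then uses the reflection data to arrange that for any $\delta_1<\delta_2$ in $Y$ there is a $\Sigma_n$-elementary embedding $j:V_{\delta_2}\to V_\kappa$ with $j(\delta_1)=\delta_2$. Restrictions of such a $j$ to $V_{\delta_1+\alpha}$ for $\alpha<\delta_2$ directly witness the $\alpha$-extendibility of $\delta_1$; since $Y$ is unbounded in $\kappa$, each $\delta_1\in Y$ is therefore $\alpha$-extendible for all $\alpha<\kappa$, giving $Y\subseteq\bigcap_{\alpha<\kappa}X_\alpha$. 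The idea you are missing is to use $B_0(V_0)$ to manufacture embeddings \emph{between levels below $\kappa$}---Magidor-style witnesses that upgrade diagonal-intersection extendibility to full extendibility---rather than to reflect the filter itself and then attempt to climb back.
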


\begin{proof}

Suppose that $X \in L_{1}(V_{\kappa})$ or $X \in L_{2}(V_{\kappa})$, where we require $X$ to be parameter-free definable in the latter case. In either case, define $j(X)$ to be the element of $L_{1}(V_{\rho})$ or $L_{2}(V_{\rho})$ defined by the same formula as the formula defining $X$, the choice of formula doesn't matter.

\bigskip

For all $X \in L_{1}(V_{\kappa})$, we have $X \in F$ iff $\kappa \in j(X)$, so that $F \cap L_{1}(V_{\kappa})$ is ordinal-definable in $V_{\rho}$. And there is a normal proper filter $F''$ on $\kappa$ which is a parameter-free definable of $L_2(V_{\kappa})$, such that $\langle X_\alpha \mid \alpha<\kappa \rangle \subseteq F'' \subseteq F$. Define a filter $F'$ on $\rho$ by $F':=j(F'')$. $F'$ is also a normal proper filter, and $j(X_{\alpha}) \in F'$ for each $\alpha<\kappa$, recalling that when $\alpha<\kappa$, the notation $X_\alpha$ denotes the set of $\alpha$-extendible cardinals less than $\kappa$. The assertion that for every $X$ which is in $F''$ and ordinal-definable in $V_{\kappa}$, $\kappa \in j(X)$, is true relative to $V_{\rho}$, and we are assuming that $(V_{\kappa}, V_{\rho})$ is a model of $B_0(V_0)$. For a sufficiently large $n>0$ we can find a $Y\in F$ such that for all $\delta_1<\delta_2<\kappa$ with $\delta_1, delta_2 \in Y$, we have $V_{\delta_1} \prec_{n} V_{\delta_2} \prec_{n} V_{\kappa}$. We can assume $Y \subseteq \Delta_{\alpha<\kappa} X_{\alpha}$ and that $Y \in L_1(V_\kappa)$. So if $\beta \in Y$ then $\beta \in X_\alpha$ for all $\alpha<\beta$. We want to claim that $Y$ can be chosen to satisfy all the previously stated hypotheses together with the statement that even if $\beta'<\beta$, $\beta' \in Y$, we still have $\beta' \in X_\alpha$ for all $\alpha<\beta$. This is because we can choose $Y$ in such a way that whenever $\delta_1<\delta_2<\kappa$ and $\delta_1, \delta_2 \in Y$ there is a $\Sigma_{n}$-elementary embedding $j:V_{\delta_2} \prec_{n} V_{\kappa}$ with $j(\delta_1)=\delta_2$. Choosing $n$ to be sufficiently large will yield the desired statement. From this we get $Y\subseteq\bigcap_{\alpha<\kappa} X_\alpha$ and so $\bigcap_{\alpha<\kappa} X_\alpha$ is non-empty as claimed. Thus we get the desired conclusion that $\kappa$ is a limit of cardinals that are extendible in $V_{\kappa}$, in fact we can even say $C^{(n)}$-extendible in $V_{\kappa}$, rather than just extendible in $V_{\kappa}$, for every $n$ and so by results of \cite{Bagaria2012} we conclude that $\kappa$ is a Vop\v{e}nka scheme cardinal. We have shown if $(V_{\kappa}, V_{\rho})$ is a model for $B_0(V_0)$ then $\kappa$ is a Vop\v{e}nka scheme cardinal with $V_{\kappa} \prec V_{\rho}$.
\end{proof}

This completes our description of our initial line of thought leading to the conclusion that Vop\v{e}nka scheme cardinals are justified.

\section{Justification for all large cardinals not known to be inconsistent with ZFC}

For the purpose of the argument discussed in this section, we will need to present the definition of Marshall's theory $B_0(V_0^0,V_0^1, \ldots V_0^{n-1})$ discussed in \cite{Marshall89}. It is a theory in the first-order language of set theory with constant symbols $V_0^0, V_0^1, \ldots V_0^{n-1}$. The axioms are the same as for $B_0(V_0)$ except that now the relativization of $\phi$ to $V_0^k$ for each axiom $\phi$ of ZFC is taken as an axiom for all $k$ such that $0\leq k<n$. And the reflection principle is now $\phi(A) \implies \exists \kappa \in \mathrm{On} \exists u (V_0^0 \cap u = R_\kappa \wedge (V_0^1)^u=V_0^0 \wedge (V_0^2)^u=V_0^1 \wedge \ldots \wedge (V_0^{n-1})^u=V_0^{n-2} \wedge \forall x \forall y (x, y \in u \equiv [x,y] \in u) \wedge \phi^{V_0^{n-1}}(A^u)$, where On is the set of ordinals in $V_0^0$. We can assume that the axiom of extensionality occurs as a conjunct of $\phi(A)$ and therefore speak of the embedding witnessing each instance of reflection, and we shall do so in what follows.

\bigskip

We introduce the following definitions.

\begin{Definition} A cardinal $\kappa$ is said to be an $n$-Marshall cardinal, for an ordinal $n\in\omega$ such that $n>0$, if there exist $\kappa_0<\kappa_1<\ldots<\kappa_{n-1}<\kappa$ such that to this finite sequence of ordinals corresponds a natural model of $B_0(V_0^0,V_0^1,\ldots V_0^{n-1})$. A cardinal $\kappa$ is said to be a 0-enormous cardinal if it is an $n$-Marshall cardinal for every $n\in\omega\setminus\{0\}$. (Note that a 0-enormous cardinal is bounded above in consistency strength by a totally huge cardinal.) A cardinal $\kappa$ is said to be an $\alpha$-enormous cardinal, for an ordinal $\alpha>0$, if there exists a sequence $\langle \kappa_\beta : \beta<\alpha \rangle$ with $\kappa_0=\kappa$ such that for all $n>0$, either $\alpha>n$ and every sequence of cardinals $\kappa_{\beta_0}<\kappa_{\beta_1}<\ldots<\kappa_{\beta_{n-1}}<\kappa_{\beta_n}$ corresponds to a natural model of $B_0(V_0^0,V_0^1,\ldots V_0^{n-1})$, or $\alpha\leq n$ and for every sequence of cardinals $\kappa_{\beta_0}<\kappa_{\beta_1}<\ldots<\kappa_{\beta_m}$ with $m\leq n$, there exist cardinal $\rho_0<\rho_1<\ldots<\rho_{n-m-1}<\kappa_0$, such that the sequence $\rho_0<rho_1<\ldots<\rho_{n-m-1}<\kappa_{\beta_0}<\kappa_{beta_1}<\ldots<\kappa_{\beta_m}$ corresponds to such a model.  \end{Definition}

Hugh Woodin has elsewhere defined an enormous cardinal to be a cardinal $\kappa$ such that there exist ordinals $\lambda, \gamma$ such that $\kappa<\lambda<\gamma$ and $V_{\kappa} \prec V_{\lambda} \prec V_{\gamma}$ and there is an elementary embedding $j:V_{\lambda+1} \prec V_{\lambda+1}$ with critical point $\kappa$. We shall eventually show that a cardinal is enormous if and only if it is $\omega+1$-enormous. Our goal shall be to show that $\textsf{ZFC}$+``$\kappa$ is an $\omega+1$-enormous cardinal" implies the $V_{\kappa}$ is a model for the existence of a proper class of cardinals with the large-cardinal property $\phi$, for every large-cardinal property $\phi$ that has been previously considered such that the existence of a cardinal with this property is not known to be inconsistent with $\textsf{ZFC}$, with the exception of the property of being an enormous cardinal.

\bigskip

So suppose that the sequence $\langle \kappa_\alpha : \alpha \leq \omega \rangle$ witnesses that $\kappa:=\kappa_0$ is an $\omega+1$=enormous cardinal. For each finite ordinal $n>0$, the sequence $\langle V_{\kappa_{i+1}} : i<n \rangle$ can serve as the sequence $\langle V_0^{i}: i<n \rangle$, in a model for the theory $B(V_0^0,V_0^1, \ldots V_0^{n-1})$, and the critical point of the embedding witnessing each reflection axiom can always be chosen to be $\kappa_0$, and further, for each theory in the sequence one may choose an embedding which witnesses reflection for all formulas simultaneously, and these embeddings may be chosen so as to cohere with one another.

\bigskip

Then all of these embeddings may be glued together to yield an embedding $j:V_{\lambda} \prec V_{\lambda}$ where $\lambda$ is the supremum of the $\kappa_{i}$'s. This embedding (together with its image under iterates of its own extension to $V_{\lambda+1}$) witnesses that each $\kappa_{i}$ is an $I_3$ cardinal.

\begin{theorem} Assume $\textsf{ZFC}$+``$\kappa_0$ is an $\omega+1$=enormous cardinal" with the notation $\langle \kappa_{i}:i \leq\omega\rangle$ as before. Let $\lambda:=\mathrm{sup}\{\kappa_{i}:i \in\omega\}$, $\gamma:=\kappa_\omega$, and $\kappa:=\kappa_{0}$. Then $V_{\kappa}\prec V_{\lambda}\prec V_{\gamma}$, and $V_{\kappa}$ is a model for the existence of a proper class of $I_0$ cardinals and also a proper class of each of the large cardinals considered by Hugh Woodin in \cite{Woodin2011}. \end{theorem}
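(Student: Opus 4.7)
The plan is to use the $(\omega+1)$-enormous hypothesis to build an $I_{0}$-witnessing embedding at $\lambda$ inside $V_{\gamma}$, and then to transfer existence downward via $V_{\kappa}\prec V_{\gamma}$. The paragraph preceding the theorem already produces the glued embedding $j\colon V_{\lambda}\prec V_{\lambda}$ with $\mathrm{crit}(j)=\kappa$, which witnesses $I_{3}$. What remains is: (i)~establishing $V_{\kappa}\prec V_{\lambda}\prec V_{\gamma}$; (ii)~upgrading $j$ to witness $I_{0}$ and each other large-cardinal property of \cite{Woodin2011} at $\lambda$ inside $V_{\gamma}$; and (iii)~reflecting down to $V_{\kappa}$.

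For (i), I would argue in the style of the proof of Theorem~2.1: the $B_{0}$ reflection scheme, applied to arbitrary $\phi$ with parameters in $V_{\kappa_{i}}$, forces $V_{\kappa_{i}}\prec_{n} V_{\kappa_{i'}}$ for arbitrarily large $n$ whenever $i<i'\leq\omega$, hence full elementarity for each such pair. Taking directed unions over $i<\omega$ then gives $V_{\kappa}\prec V_{\lambda}$, while the case $i'=\omega$ yields $V_{\kappa_{i}}\prec V_{\gamma}$ for each $i<\omega$ and hence $V_{\lambda}\prec V_{\gamma}$. For (ii), the key observation is that the glued $j$ already lies in $V_{\gamma}$, since each finite approximation $j_{n}$ is built from natural-model data indexed by $\kappa_{0}<\cdots<\kappa_{n}<\gamma$. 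The additional reflection supplied by the $(\omega+1)$-st element $\kappa_{\omega}$, applied to formulas mentioning $V_{\lambda+1}$ as a distinguished class parameter, extends $j$ canonically to $\hat{\jmath}\colon V_{\lambda+1}\prec V_{\lambda+1}$ inside $V_{\gamma}$; a standard iterability argument combined with the definability-from-$V_{\lambda+1}$ of the sets in $L(V_{\lambda+1})$ then lifts $\hat{\jmath}$ to an elementary self-embedding of $L(V_{\lambda+1})$, giving $V_{\gamma}\models{}$``$\lambda$ is $I_{0}$''. The analogous scheme, calibrated to the particular reflection and iteration demands of each property in Woodin's list, witnesses every such property of $\lambda$ inside $V_{\gamma}$.

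For (iii), $V_{\kappa}\prec V_{\gamma}$ transfers each such existence claim downward: for any property $\phi$ established of $\lambda$ in $V_{\gamma}$ and any $\alpha<\kappa$, the assertion ``$\exists\beta>\alpha\ \phi(\beta)$'' holds in $V_{\gamma}$ (witnessed by $\lambda$) and hence in $V_{\kappa}$, giving a proper class of such $\beta$ below $\kappa$. The main obstacle is step (ii): one must verify, uniformly across Woodin's hierarchy up to but not including enormous itself, that the single extra level $\kappa_{\omega}$ supplies enough reflection to extend $j$ all the way to an embedding of $L(V_{\lambda+1})$ and to provide the ancillary structural data demanded by each property in \cite{Woodin2011}, without inadvertently invoking the enormous configuration and collapsing the argument into circularity.
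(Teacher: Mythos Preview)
Your outline for (i) and (iii) matches the paper's approach. The gap is in (ii), precisely where you flag the obstacle, but your proposed resolution does not work: there is no ``standard iterability argument'' that lifts an $I_1$ embedding $\hat{\jmath}\colon V_{\lambda+1}\prec V_{\lambda+1}$ to an elementary $j\colon L(V_{\lambda+1})\prec L(V_{\lambda+1})$. The implication $I_1\Rightarrow I_0$ is not known and is expected to fail; term-by-term extension of $\hat{\jmath}$ along definability from $V_{\lambda+1}$ certainly produces a map on $L(V_{\lambda+1})$, but elementarity of that map is exactly the content of $I_0$ over $I_1$. So stopping at parameters from $V_{\lambda+1}$ and then attempting an internal lift cannot succeed.

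The paper avoids this by never trying to lift from $V_{\lambda+1}$. It uses the full force of $\gamma=\kappa_\omega>\lambda$: since each $V_{\kappa_i}\prec V_\gamma$, the Marshall reflection scheme is available for parameters anywhere in $V_\gamma$, not merely in $V_{\lambda+1}$. For any model $K$ of $\mathsf{ZF}$ with $V_{\lambda+1}\cup\gamma\subseteq K\subseteq V_\gamma$, definable in $V_\gamma$ from ordinals fixed by $j$, whose elements are ordinal-definable in $V_\gamma$ from elements of $V_{\lambda+1}$ (with the same definitions working relative to $K$), one reruns the gluing construction with parameters drawn directly from $K$. The coherent family of reflection-witnessing embeddings then yields $j\colon K\prec K$ outright, with no lifting step. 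Taking $K$ to be $L(V_{\lambda+1})$ as computed in $V_\gamma$ gives $I_0$; the analogous choices of $K$ give Laver's axiom and the further hypotheses in \cite{Woodin2011}. The essential point is that the extra level $\kappa_\omega$ buys reflection for parameters of arbitrary rank below $\gamma$, which is strictly more than reflection for parameters in $V_{\lambda+1}$; your proposal spends the extra level only to reach $V_{\lambda+1}$ and then tries to recover the rest by an argument internal to $V_{\lambda+1}$, which is exactly the step that separates $I_1$ from $I_0$.
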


\begin{proof}

In fact, using the hypothesis $V_{\kappa_{i}} \prec V_{\gamma}$, we get reflection for any formula with parameters from anywhere in $V_{\gamma}$, even with rank greater than or equal to $\lambda$, and we have embeddings witnessing the reflection of the kind described in each theory $B_0(V_0^0, V_0^1, \ldots V_0^{n-1})$ for each $n$, with the constant symbols $V_0^k$ interpreted by $V_{\kappa_{k+1}}$ for all integers $k$ such that $0\leq k<n$. Consider first the case of parameters from $V_{\lambda+1}$, in this case for each $n$ the same choice of embedding will work for all formulas, and the family of restrictions of these embeddings to $V_{\kappa_{n-1}}$, where $n$ is the positive finite ordinal corresponding to the embedding, can all be glued together to obtain an embedding with domain $V_{\lambda}$, and this determines an embedding with domain $V_{\lambda+1}$. This embedding withnesses that $\kappa$ is an $I_1$ cardinal, and in particular this embedding induces a unique $\omega$-huge embedding $j:V_{\gamma} \prec M_{\gamma}$ with critical point $\kappa$. For a model $K$ of $\textsf{ZF}$ such that $V_{\lambda+1} \cup \gamma \subseteq K \subseteq V_{\gamma}$, definable in $V_{\gamma}$ from ordinals fixed by $j$, in which every element is ordinal definable in $V_{\gamma}$ from elements of $V_{\lambda+1}$, and the same definition with ordinal parameters works relative to $K$ as well as $V_{\gamma}$, we can make use of the same argument using parameters from $K$ to obtain an embedding $j:K \prec K$. So we see that we obtain an embedding witnessing that $\kappa$ is an $I_0$ cardinal, and an embedding of the kind described in Laver's axiom and all the other axioms stronger than $I_0$ considered by Hugh Woodin in \cite{Woodin2011}. Moreover in each case we obtain that in $V_{\kappa}$ there is a proper class of cardinals $\delta$ which are the critical point of such an embedding. This completes our argument that from our stated assumption we obtain a model for every large-cardinal axiom not known to be inconsistent with choice.

\end{proof}

In this way, using ideas built on those in Marshall's paper, one can provide motivations for all large cardinals not known to be inconsistent with choice, while still having principled reasons to stop short of the point of inconsistency with choice.

\pagebreak[4]

\end{document}